\documentclass[11pt]{article}                                                                                                       
\usepackage{epsfig,psfrag,amsmath,amssymb,latexsym}

\usepackage{amsmath}
\usepackage{amssymb}
\usepackage{amsfonts}
\usepackage{latexsym}
\usepackage{amsthm}
\usepackage{amsxtra}
\usepackage{amscd}
\usepackage{mathrsfs}
\usepackage{mathscinet}
\usepackage{bm}
\usepackage{graphicx}

\usepackage{color}
\usepackage{cite}
\usepackage{hyperref}
\usepackage{amscd}
\usepackage{color}
\usepackage{amsfonts}
\usepackage{graphicx}
\usepackage{wasysym}
\usepackage{mathrsfs}
\usepackage{bbm}
\pagestyle{plain} \oddsidemargin0cm \topmargin-.6cm
\textheight21.8cm \textwidth16cm
\parindent0.5cm

\theoremstyle{plain} 
\newtheorem{theorem}{Theorem}
\newtheorem*{theorem*}{Theorem}
\newtheorem{lemma}[theorem]{Lemma}
\newtheorem*{lemma*}{Lemma}
\newtheorem{corollary}[theorem]{Corollary}
\newtheorem*{corollary*}{Corollary}
\newtheorem{fact}[theorem]{Fact}
\newtheorem{proposition}[theorem]{Proposition}
\newtheorem*{proposition*}{Proposition}

\newtheorem*{definition*}{Definition}

\newtheorem*{conjecture*}{Conjecture}

\newtheorem*{example*}{Example}

\newtheorem*{remark*}{Remark}

\newcommand{\Lstar}{\Lambda^*}
\newcommand{\OLstar}{\Omega_\Lambda^*}
\newcommand{\obs}{\mathcal{O}}

\newcommand{\var}{\operatorname{Var}}

\newcommand{\K}{\mathcal{K}}
\newcommand{\cE}{\mathcal{E}}

\newcommand{\cL}{\mathcal{L}}

\newcommand{\ee}{{\mathrm e}}
\newcommand{\ii}{{\mathrm i}}  

\newcommand{\R}{{\mathbb R}}  
\newcommand{\N}{{\mathbb N}}  
  
\newcommand{\Z}{{\mathbb Z}}

\newcommand{\md}{{\mathrm{d}}}


\usepackage[dvipsnames]{xcolor}

\textwidth16cm
\textheight22.1cm
\topmargin-1.3cm
\oddsidemargin0cm
\evensidemargin0cm

\begin{document}
\thispagestyle{empty}
\begin{center}
{\bf\Large Variance of voltages in a lattice Coulomb gas}
\\[3mm]
Diana Conache\footnote[1]{%
Technische Universit{\"{a}}t M{\"{u}}nchen, 
Zentrum Mathematik, Bereich M5,
D-85747 Garching bei M{\"{u}}nchen,
Germany.
E-mail: diana.conache@tum.de, srolles@ma.tum.de}
\hspace{1mm} 
Markus Heydenreich\footnotemark[2]
\hspace{1mm} 
Franz Merkl\footnote[2]{Mathematical Institute, Ludwig-Maximilians-Universit\"at M\"unchen,
Theresienstr.\ 39,
D-80333 Munich,
Germany.
E-mail: m.heydenreich@lmu.de, merkl@math.lmu.de
}
\hspace{1mm} 
Silke W.W.\ Rolles\footnotemark[1]
\\[3mm]
{\small \today}\\[3mm]
\end{center}
\begin{abstract}
We study the behavior of the variance of the difference of energies for putting an additional electric unit charge at two different locations in the two-dimensional lattice Coulomb gas in the high-temperature regime. For this, we exploit the duality between this model and a discrete Gaussian model. Our estimates follow from a spontaneous symmetry breaking in the latter model. \\[2mm]
Keywords: Spontaneous symmetry breaking, Poisson summation, lattice Coulomb gas.\\
MSC 2020: Primary 82B20, Secondary 60K35.
\end{abstract}

\section{Introduction}
The Poisson summation formula is by now a standard tool in Statistical Mechanics for proving duality between models in the high and low-temperature
regime. It was first used by McKean \cite{MR162515} to prove the Kramers-Wannier duality for the infinite two-dimensional Ising Model. Further
results for other models followed. Gruber and Hintermann \cite{gruber1974implication} showed that for arbitrary lattice spin systems one can relate the high and low temperature expansions by means of the Poisson summation formula. Several overview papers of the results on duality appeared in the beginning of the 1980's both from a mathematical perspective \cite{MR625131} and from a physics one \cite{MR569167}.

In this note, we make use of the Poisson summation formula to show the duality between a two-dimensional discrete Gaussian model with pinning at the origin at low temperature and a lattice Coulomb gas at high temperature. Such lattice Coulomb gas representations were previously treated in \cite{Kadanoff_1978}. Expected squared height differences in the discrete Gaussian model correspond by duality to variances of voltages in the lattice Coulomb gas. These voltages are highly non-local observables in terms of the charge distribution, so that known results on Debye screening \cite{B, BF, BM} do not apply to them. In Corollary \ref{cor:symmetry_breaking} we give a proof of the spontaneous breaking of the internal $\Z^1$ symmetry in the discrete Gaussian model via a Peierls argument on the torus, taking also care of contours with non-zero homology. This comes in contrast to a continuous version of the model, where the symmetry cannot be broken, even for more general models due to the Mermin-Wagner theorem. We then employ this symmetry breaking to show that the variance of the energy cost for transporting a unit test charge between two given sites $i$ and $j$ of a lattice, i.e.\ the variance of the voltage between $i$ and $j$,  is asymptotically proportional to $(\beta^*)^{-1}\log|i-j|$ for small inverse temperature $\beta^*$. We make this precise in Theorem ~\ref{thm:main} below. 

The duality between the discrete Gaussian model and the lattice Coulomb gas was also discussed by Fr\"ohlich and Spencer in \cite{FS}; see also earlier work on the Kosterlitz-Thouless transition by \cite{KT, Villain}. A discussion on the connection to the results in \cite{FS} is included in the last section. There we also contrast our results to Debye screening proven in three space dimensions by Brydges and Federbush \cite{B, BF}.

\section{The two models}
\subsection{The discrete Gaussian model}
Let $\Lambda\Subset \Z^2$ be a finite square box with periodic boundary conditions containing the origin and let 
$\cE_\Lambda$ denote the set of undirected edges between nearest-neighbor points in $\Lambda$. 
Consider the space of integer-valued configurations which are pinned at the origin:
\begin{align}
\Omega_\Lambda=\{x\in\Z^\Lambda:x_0=0\}. 
\end{align}
We consider the Hamiltonian 
\begin{align}
H_\Lambda(x)=\sum_{\{i,j\}\in\cE_\Lambda}(x_i-x_j)^2, \quad x\in\Omega_\Lambda.
\end{align}
Using the lattice Laplacian $\Delta=(\Delta_{ij})_{i,j\in\Lambda}$, 
\begin{align}
\Delta_{ij}=\left\{\begin{array}{ll}
1 & \text{if }i\sim j, \\
-4 & \text{if }i=j,\\
0 & \text{else, }
\end{array}\right.
\end{align}
we can rewrite it as 
\begin{align}
H_\Lambda(x)=-\sum_{i,j\in\Lambda}x_i\Delta_{ij}x_j,\quad x\in\Omega_\Lambda.
\end{align}
The partition sum and the corresponding Gibbs measure of 
the discrete Gaussian model with pinning at the origin at given inverse temperature 
$\beta>0$ are given by 
\begin{align}
Z_{\Lambda,\beta}=\sum_{x\in\Omega_\Lambda}\ee^{-\beta H_\Lambda(x)} \quad \text{ and } \quad
P_{\Lambda,\beta}=\frac{1}{Z_{\Lambda, \beta}}\sum_{x\in\Omega_\Lambda}\ee^{-\beta H_\Lambda(x)}\delta_x, 
\label{def:finite-volume-Gibbs-measure}
\end{align}
respectively; here $\delta_x$ denotes the Dirac measure in $x$.

\subsection{The lattice Coulomb gas}

Consider the Fourier Transform of a given 
integrable function $f$ on $\R^n$: 
\begin{align}
\hat{f}(k)=\int_{\R^n} \ee^{-\ii k\cdot x}f(x)\md x, \qquad k\in\R^n. 
\end{align}

\paragraph{Poisson Summation Formula.} We recall a well-known duality result from Fourier Analysis.

\begin{fact}[Poisson Summation Formula, {\cite[Theorem 3.2.8]{MR3243734}}]
\label{fact:PoissonSummationFormula}
Let $n\in\N$ and let $f$ be a continuous function on $\R^n$ which satisfies 
for some $C, \delta>0$ and for all $x\in\R^n$
\begin{align}
|f(x)|\le C(1+|x|)^{-n-\delta},
\end{align}
and whose Fourier transform $\hat{f}$ restricted to $2\pi\Z^n$ satisfies
\begin{align}
\label{eq:PoissonSummationFormula-condition}
\sum_{k\in 2\pi\Z^n}|\hat{f}(k)|<\infty.
\end{align}
Then for all $y\in\R^n$ we have
\begin{align}
\label{eq:PoissonSummationFormula}
\sum_{k\in2\pi\Z^n} \hat{f}(k)\ee^{\ii k\cdot y}=\sum_{x\in\Z^n}f(x+y),
\end{align}
and in particular
\begin{align}
\label{eq:PoissonSummationFormula_simple}
\sum_{k\in2\pi\Z^n} \hat{f}(k)=\sum_{x\in\Z^n}f(x).
\end{align}
\end{fact}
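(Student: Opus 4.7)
The plan is to reduce this to a Fourier series computation on the torus $\R^n/\Z^n$. First I would define the periodization
\begin{align}
F(y):=\sum_{x\in\Z^n}f(x+y)
\end{align}
and use the decay bound $|f(x)|\le C(1+|x|)^{-n-\delta}$ to show the series converges absolutely and uniformly on compact subsets of $\R^n$. Therefore $F$ is continuous, and it is tautologically $\Z^n$-periodic. This sets the stage for expanding $F$ in a Fourier series on the fundamental domain $[0,1]^n$.

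Next I would compute the Fourier coefficients of $F$. For each $m\in\Z^n$, uniform convergence of the defining series allows me to interchange sum and integral:
\begin{align}
c_m:=\int_{[0,1]^n}F(y)\,\ee^{-2\pi\ii m\cdot y}\,\md y=\sum_{x\in\Z^n}\int_{[0,1]^n}f(x+y)\,\ee^{-2\pi\ii m\cdot(x+y)}\,\md y=\int_{\R^n}f(z)\,\ee^{-2\pi\ii m\cdot z}\,\md z=\hat f(2\pi m),
\end{align}
where I used $\ee^{-2\pi\ii m\cdot x}=1$ for $x\in\Z^n$ to complete the shift before stitching the translates $x+[0,1]^n$ into $\R^n$.

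Finally, assumption \eqref{eq:PoissonSummationFormula-condition} gives $\sum_{m\in\Z^n}|c_m|<\infty$, so the trigonometric series $G(y):=\sum_{m\in\Z^n}c_m\,\ee^{2\pi\ii m\cdot y}$ converges absolutely and uniformly to a continuous function. Since the Ces\`aro means of the partial Fourier sums of $F$ converge to $F$ in $L^2$ on $[0,1]^n$ by standard Fejér theory, and simultaneously converge uniformly to $G$, we have $F=G$ pointwise on $\R^n$. Reindexing $k=2\pi m$ yields \eqref{eq:PoissonSummationFormula}, and setting $y=0$ gives \eqref{eq:PoissonSummationFormula_simple}. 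The main obstacle is precisely this identification $F=G$: on its own, the decay condition on $f$ would only give continuity of $F$ without controlling its Fourier series, while the summability of $\hat f$ on $2\pi\Z^n$ alone would not show the limit of the Fourier series coincides with $F$ at the specific point $y$. The two hypotheses are tailored to work together so that both sides of \eqref{eq:PoissonSummationFormula} are continuous functions of $y$ with the same Fourier coefficients, forcing pointwise equality.
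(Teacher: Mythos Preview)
The paper does not prove this statement at all: it is recorded as a \emph{Fact} with a reference to Grafakos \cite[Theorem 3.2.8]{MR3243734} and then used as a black box. So there is no ``paper's own proof'' to compare against.

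Your argument is the standard one and is correct. One small stylistic point: the detour through Fej\'er/Ces\`aro means is unnecessary. Once you know that $F$ is continuous on the torus and that its Fourier coefficients $c_m=\hat f(2\pi m)$ are absolutely summable, the uniformly convergent trigonometric series $G(y)=\sum_m c_m\,\ee^{2\pi\ii m\cdot y}$ is a continuous function whose Fourier coefficients are again $c_m$ (by termwise integration). Two continuous periodic functions with the same Fourier coefficients agree in $L^2$ and hence everywhere, giving $F=G$ directly. This is exactly the route taken in the cited reference.
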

In order to show the duality between the discrete Gaussian model and the lattice Coulomb gas, we make use of the Poisson summation formula by applying it to the Boltzmann factor of the first model. Note that, for a positive definite matrix $A\in\R^{n\times n}$ and $g(x)=\exp(-x^tAx)$, one has 
\begin{align}\label{eq:Fourier-g}
\hat g(k)=\frac{\pi^{\frac{n}{2}}}{\sqrt{\det A}}\ee^{-\frac14k^t A^{-1} k}.
\end{align}
Since $g$ satisfies the hypotheses of Fact \ref{fact:PoissonSummationFormula}, 
formulas \eqref{eq:PoissonSummationFormula} - \eqref{eq:PoissonSummationFormula_simple} 
hold for $g$.

\paragraph{Green's function.}
Let $P_i$ be the law of a simple random walk $(X_t)_{t\in\N_0}$
on $\Lambda$ started at $i\in\Lambda$. 
For $i,j\in\Lambda$, we define the Green's function 
\begin{align}
G_{ij}=\lim_{\lambda\downarrow 0}
\sum_{t\in\N_0}\left(P_i(X_t=j)-\frac{1}{|\Lambda|}
\right)\ee^{-\lambda t}. 
\end{align}
The limit exists and is finite. To see this, combine odd and even times 
in pairs of two and use that $P_i(X_{2t}=j)+P_i(X_{2t+1}=j)$ converges exponentially 
fast to $2/|\Lambda|$ as $t\to\infty$. 

\begin{lemma}
\label{le:sum-delta-G}
For $j,k\in\Lambda$, we have 
\begin{align}
-\frac14\sum_{i\in\Lambda}\Delta_{ki}G_{ij}=\delta_{kj}-\frac{1}{|\Lambda|}. 
\end{align}
\end{lemma}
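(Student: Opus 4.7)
The identity has a natural interpretation: if $P$ denotes the transition operator of the simple random walk on $\Lambda$ (so $(Pf)(k)=\frac14\sum_{i\sim k}f(i)$), then $-\Delta/4 = I-P$, and the claim is that $(I-P)G_{\cdot j}(k)=\delta_{kj}-\frac1{|\Lambda|}$, i.e.\ $G$ is the Green's function of $I-P$ on functions of mean zero. I would establish this by plugging the definition of $G$ into the left-hand side and exploiting the Markov property to reduce to a telescoping (Abel) sum.

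Concretely, since $\Lambda$ is finite I can interchange $\sum_{i\in\Lambda}\Delta_{ki}(\cdot)$ with the limit and the (regularized) series to obtain
\begin{align*}
-\tfrac14\sum_{i\in\Lambda}\Delta_{ki}G_{ij}
=\lim_{\lambda\downarrow 0}\sum_{t\in\N_0}\Bigl[\bigl(P_k(X_t=j)-\tfrac1{|\Lambda|}\bigr)-\tfrac14\sum_{i\sim k}\bigl(P_i(X_t=j)-\tfrac1{|\Lambda|}\bigr)\Bigr]\ee^{-\lambda t}.
\end{align*}
The constant terms $1/|\Lambda|$ cancel because $k$ has exactly four neighbors, and by a one-step Markov decomposition $\tfrac14\sum_{i\sim k}P_i(X_t=j)=P_k(X_{t+1}=j)$. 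Hence the bracket collapses to $P_k(X_t=j)-P_k(X_{t+1}=j)$.

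A direct Abel-summation computation then gives, for $\lambda>0$,
\begin{align*}
\sum_{t\in\N_0}\bigl[P_k(X_t=j)-P_k(X_{t+1}=j)\bigr]\ee^{-\lambda t}
=\delta_{kj}+(1-\ee^{\lambda})\sum_{t\ge 1}P_k(X_t=j)\ee^{-\lambda t}.
\end{align*}
To take $\lambda\downarrow 0$ I would split $P_k(X_t=j)=\tfrac1{|\Lambda|}+r_t$, where the parity trick from the excerpt (pairing consecutive times) shows that $r_t+r_{t+1}$ decays exponentially; in particular $\sum_{t\ge 1}r_t\,\ee^{-\lambda t}$ stays bounded as $\lambda\downarrow 0$. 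Since $(1-\ee^{\lambda})$ is $O(\lambda)$, the contribution of the $r_t$ part vanishes, while $(1-\ee^{\lambda})\cdot\tfrac1{|\Lambda|}\cdot\tfrac{\ee^{-\lambda}}{1-\ee^{-\lambda}}\to-\tfrac1{|\Lambda|}$. Combining gives $\delta_{kj}-\tfrac1{|\Lambda|}$, as claimed.

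The only subtle point, and the main obstacle, is the last step: the Green's function sum converges only in the Abel (regularized) sense, so one must be careful not to split the two pieces of the telescoping sum independently in the $\lambda\downarrow 0$ limit. Once the uniform component is isolated as above and the remainder controlled by the exponential parity convergence, the limit is unambiguous.
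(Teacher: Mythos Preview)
Your argument is correct and follows essentially the same route as the paper: rewrite $-\Delta/4=I-P$, use the one-step Markov property to turn $\sum_{i\sim k}P_i(X_t=j)$ into $P_k(X_{t+1}=j)$, and then pass to the limit in the $\lambda$-regularized telescoping sum. The only cosmetic difference is in the bookkeeping of that last limit: the paper keeps the $-1/|\Lambda|$ term inside and multiplies the shifted series by an extra harmless factor $\ee^{-\lambda}$ so that the two regularized sums cancel term-by-term except at $t=0$, whereas you perform an explicit Abel resummation and split off the uniform part; both lead to the same conclusion, and your control of the remainder via the parity pairing is exactly the mechanism the paper invokes when justifying that $G_{ij}$ is well defined.
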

\begin{proof}
For all $k,i\in\Lambda$, we have
\begin{align}
\Delta_{ki}=4(P_k(X_1=i)-\delta_{ki}). 
\end{align}
Therefore, for $j,k\in\Lambda$, it follows
\begin{align}
& -\frac14\sum_{i\in\Lambda}\Delta_{ki}G_{ij}
=-\sum_{i\in\Lambda}(P_k(X_1=i)-\delta_{ki})G_{ij} 
=G_{kj} -\sum_{i\in\Lambda}P_k(X_1=i)G_{ij}
\nonumber\\
= & \lim_{\lambda\downarrow 0}
\sum_{t\in\N_0}\left(P_k(X_t=j)-\frac{1}{|\Lambda|}
\right)\ee^{-\lambda t}- \sum_{i\in\Lambda}P_k(X_1=i)
\lim_{\lambda\downarrow 0}
\sum_{t\in\N_0}\left(P_i(X_t=j)-\frac{1}{|\Lambda|}
\right)\ee^{-\lambda t}
\nonumber\\
= & \lim_{\lambda\downarrow 0}
\sum_{t\in\N_0}\left(P_k(X_t=j)-\frac{1}{|\Lambda|}
\right)\ee^{-\lambda t}- 
\lim_{\lambda\downarrow 0}
\sum_{t\in\N_0}\left(P_k(X_{t+1}=j)-\frac{1}{|\Lambda|}
\right)\ee^{-\lambda t}\ee^{-\lambda}
\nonumber\\
=&P_k(X_0=j)-\frac{1}{|\Lambda|}
= \delta_{kj}-\frac{1}{|\Lambda|}. 
\end{align}
For the third line we used the Markov property and introduced an 
extra factor $\ee^{-\lambda}$, which converges to $1$ as $\lambda\downarrow0$. 
\end{proof}

The following identity also has a continuous analogue for the Green's function of a randomly shifted Gaussian Free Field. More precisely, see Remark 8.20 and Exercise 8.7 in \cite{friedli-velenik2018} for the continuum Gaussian Free Field and Exercise 1.4. in \cite{MR4043225} for the discrete Gaussian Free Field.
\begin{lemma}
\label{lem:inverse_hamiltonian}
For $i,j\in\Lambda\setminus\{0\}$, we have that the inverse of the matrix $\Delta_{0^c0^c}=(\Delta_{ij})_{i,j\in\Lambda\setminus\{0\}}$ is given by
\begin{align}
-4((\Delta_{0^c0^c})^{-1})_{ij}=G_{ij}-G_{i0}-G_{0j}+G_{00}.
\end{align}
\end{lemma}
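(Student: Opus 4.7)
The plan is to verify directly that the matrix $M = (M_{ij})_{i,j \in \Lambda\setminus\{0\}}$ with $M_{ij} := G_{ij} - G_{i0} - G_{0j} + G_{00}$ is the inverse of $-\tfrac14\Delta_{0^c0^c}$, i.e.\ to show
\begin{align}
-\frac14\sum_{i\in\Lambda\setminus\{0\}}\Delta_{ki}M_{ij} = \delta_{kj}
\qquad\text{for all } k,j\in\Lambda\setminus\{0\}.
\end{align}
This reduces the claim to applying Lemma~\ref{le:sum-delta-G} four times, once for each of the terms $G_{ij}, G_{i0}, G_{0j}, G_{00}$, while carefully handling the missing summand at $i=0$.

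The key observation that lets us pass from sums over $\Lambda$ to sums over $\Lambda\setminus\{0\}$ is that each row of the Laplacian sums to zero: $\sum_{i\in\Lambda}\Delta_{ki}=0$, hence
\begin{align}
\sum_{i\in\Lambda\setminus\{0\}}\Delta_{ki} = -\Delta_{k0}.
\end{align}
Using this together with Lemma~\ref{le:sum-delta-G} applied to $G_{\cdot j}$ and $G_{\cdot 0}$, I would compute
\begin{align}
-\tfrac14\sum_{i\in\Lambda\setminus\{0\}}\Delta_{ki}G_{ij}
 &= \delta_{kj}-\tfrac{1}{|\Lambda|}+\tfrac14\Delta_{k0}G_{0j},\\
\phantom{-}\tfrac14\sum_{i\in\Lambda\setminus\{0\}}\Delta_{ki}G_{i0}
 &= \tfrac{1}{|\Lambda|}-\tfrac14\Delta_{k0}G_{00},
\end{align}
where in the second identity I used that $\delta_{k0}=0$ since $k\neq 0$. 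The remaining two contributions, those involving the constants $G_{0j}$ and $G_{00}$, are handled by pulling them out of the sum and using $\sum_{i\in\Lambda\setminus\{0\}}\Delta_{ki}=-\Delta_{k0}$; they produce $-\tfrac14\Delta_{k0}G_{0j}$ and $+\tfrac14\Delta_{k0}G_{00}$, respectively.

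Adding the four contributions, the $\pm\tfrac{1}{|\Lambda|}$ terms cancel, and the four $\pm\tfrac14\Delta_{k0}G_{\bullet}$ terms cancel in pairs, leaving only $\delta_{kj}$ as desired. There is no genuine obstacle here; the only thing to watch is the bookkeeping at the excluded site $0$ (which is precisely why the zero-mode constant $1/|\Lambda|$ from Lemma~\ref{le:sum-delta-G} is cancelled) and the requirement $k\neq 0$, which is exactly what restricts us to the invertible submatrix $\Delta_{0^c0^c}$.
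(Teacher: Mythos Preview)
Your argument is correct and follows essentially the same route as the paper: both use $\sum_{i\in\Lambda}\Delta_{ki}=0$ together with Lemma~\ref{le:sum-delta-G} and the fact that $k\neq 0$. The only cosmetic difference is that the paper first observes $M_{0j}=G_{0j}-G_{00}-G_{0j}+G_{00}=0$, which lets it extend the sum over $i$ from $\Lambda\setminus\{0\}$ to all of $\Lambda$ in one step and thereby avoid tracking the four $\tfrac14\Delta_{k0}G_{\bullet}$ boundary terms individually; your explicit bookkeeping of these terms and their pairwise cancellation is equivalent.
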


\begin{proof}
Since 
\begin{align}
\sum_{i\in\Lambda}\Delta_{ki}=0
\end{align}
for all $k\in\Lambda$, we calculate for $j,k\in\Lambda\setminus\{0\}$, 
using Lemma \ref{le:sum-delta-G} in the third step,  
\begin{align}
&\sum_{i\in\Lambda\setminus\{0\}}\Delta_{ki}(G_{ij}-G_{i0}-G_{0j}+G_{00})
=\sum_{i\in\Lambda}\Delta_{ki}(G_{ij}-G_{i0}-G_{0j}+G_{00})\nonumber\\
=& \sum_{i\in\Lambda}\Delta_{ki}(G_{ij}-G_{i0})
=-4\left(\delta_{kj}-\frac{1}{|\Lambda|}\right)+4\left(\delta_{k0}-\frac{1}{|\Lambda|}\right)
=-4\delta_{kj}.
\end{align}
\end{proof}

\paragraph{Duality between the discrete Gaussian model and the lattice Coulomb gas.}
The matrix $\Delta_{0^c0^c}=(\Delta_{ij})_{i,j\in\Lambda\setminus\{0\}}$ is 
negative definite. Applying the Poisson summation formula \eqref{eq:PoissonSummationFormula_simple} to $g(x)=\exp(-x^tAx)$ with 
$n=|\Lambda|-1$ and $A=-\beta \Delta_{0^c0^c}$, we connect the partition sum $Z_{\Lambda,\beta}$ of the discrete Gaussian model defined in \eqref{def:finite-volume-Gibbs-measure} with the partition sum
\begin{equation}\label{eq:Z-stern}
Z^*_{\Lambda,\beta^{*}}:=\sum_{k\in 2\pi\Z^{\Lambda\setminus\{0\}}}
\ee^{\beta^* k^t(\Delta_{0^c0^c})^{-1}k}
\end{equation}
of the lattice Coulomb gas at inverse temperature $\beta^{*}:=(4\beta)^{-1}$ by the following duality equation
\begin{align}
\label{eq:Z-poisson}
Z_{\Lambda,\beta}=\sum_{x\in\Z^{\Lambda\setminus\{0\}}}\ee^{-\beta x^t(-\Delta_{0^c0^c})x}
=\left(\frac{\pi}{\beta}\right)^{\frac{|\Lambda|-1}{2}}
\det(-\Delta_{0^c0^c})^{-\frac12}Z^*_{\Lambda,\beta^{*}}. 
\end{align}
Let $\K_\Lambda:=\{k\in\R^\Lambda:\,\sum_{i\in\Lambda} k_i=0\}$, which we view as dual to $\Omega_\Lambda^\R:=\{x\in\R^\Lambda:x_0=0\}$. Define  $\OLstar:=2\pi\Z^{\Lambda}\cap \K_\Lambda$. The Gibbs measure of the lattice Coulomb gas on $\OLstar$ is given by
\begin{equation}\label{def:GibbsCG}
P_{\Lambda,\beta^{*}}^*:=\frac{1}{Z_{\Lambda, \beta^{*}}^*}\sum_{k\in 2\pi\Z^{\Lambda\setminus\{0\}}}
\ee^{\beta^* k^t(\Delta_{0^c0^c})^{-1}k}\delta_k. 
\end{equation}

\begin{corollary}[The dual lattice Coulomb gas in terms of the Green's function]
\label{cor:rel-G-delta-inverse}
For all $k\in\K_\Lambda$, writing $k_{0^c}=(k_i)_{i\in\Lambda\setminus\{0\}}$, one has 
\begin{align}
-4k_{0^c}^t(\Delta_{0^c0^c})^{-1}k_{0^c}=k^tG k.
\end{align}
In particular, 
\begin{align}\label{eq:Coulomb-partition-function}
Z^*_{\Lambda,\beta^{*}}=\sum_{k\in \OLstar}\ee^{-\frac{\beta^*}{4} k^tG k} \quad \text{ and } \quad P_{\Lambda,\beta^{*}}^*=\frac{1}{Z_{\Lambda, \beta^{*}}^*}\sum_{k\in\OLstar}\ee^{-\frac{\beta^*}{4} k^tG k}\delta_k. 
\end{align}
\end{corollary}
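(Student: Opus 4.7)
The plan is a direct algebraic verification built on Lemma~\ref{lem:inverse_hamiltonian}, exploiting the charge-neutrality constraint $\sum_{i\in\Lambda} k_i=0$ satisfied by every $k\in\K_\Lambda$.

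First I would substitute the identity $-4((\Delta_{0^c0^c})^{-1})_{ij}=G_{ij}-G_{i0}-G_{0j}+G_{00}$ from Lemma~\ref{lem:inverse_hamiltonian} into $-4k_{0^c}^t(\Delta_{0^c0^c})^{-1}k_{0^c}$, splitting the resulting four sums and setting $S:=\sum_{i\in\Lambda\setminus\{0\}}k_i$. Factoring, the expression becomes
\[
\sum_{i,j\in\Lambda\setminus\{0\}} k_ik_j G_{ij}\;-\;S\sum_{i\in\Lambda\setminus\{0\}} k_i G_{i0}\;-\;S\sum_{j\in\Lambda\setminus\{0\}} k_j G_{0j}\;+\;S^2 G_{00}.
\]
Next I would expand $k^tGk=\sum_{i,j\in\Lambda}k_ik_jG_{ij}$ directly by splitting off the index $0$ in each of the two sums, obtaining three types of boundary terms multiplied by $k_0$ or $k_0^2$. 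The charge-neutrality constraint gives $k_0=-S$, and substituting this produces exactly the same four-term expression as above. This proves the first identity.

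For the dual formulas, I would note that the summation set $2\pi\Z^{\Lambda\setminus\{0\}}$ appearing in \eqref{eq:Z-stern} and \eqref{def:GibbsCG} is in bijection with $\OLstar=2\pi\Z^\Lambda\cap\K_\Lambda$ via the canonical extension $k_{0^c}\mapsto k$ with $k_0:=-S$, which automatically lies in $2\pi\Z$ and forces $k\in\K_\Lambda$. Applying the identity just proven (with the prefactor $\beta^*$) rewrites $\beta^* k_{0^c}^t(\Delta_{0^c0^c})^{-1}k_{0^c}=-\tfrac{\beta^*}{4}k^tGk$, from which the reformulations of $Z^*_{\Lambda,\beta^*}$ and $P^*_{\Lambda,\beta^*}$ in \eqref{eq:Coulomb-partition-function} follow immediately.

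I do not anticipate any conceptual obstacle: the whole content sits in Lemma~\ref{lem:inverse_hamiltonian} and in the observation that the neutrality constraint is precisely what upgrades a quadratic form on $\Lambda\setminus\{0\}$ involving $(\Delta_{0^c0^c})^{-1}$ into one on all of $\Lambda$ involving the (non-invertible) Green's matrix $G$. The only mild care needed is to track signs and to recognize that the four rank-one corrections $-G_{i0}-G_{0j}+G_{00}$ are exactly the boundary terms produced by replacing $k_0$ with $-S$.
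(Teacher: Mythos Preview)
Your proposal is correct and follows essentially the same route as the paper: both arguments invoke Lemma~\ref{lem:inverse_hamiltonian}, use charge neutrality in the form $k_0=-\sum_{i\neq 0}k_i$, and match the four terms arising from the rank-one corrections $-G_{i0}-G_{0j}+G_{00}$ with the boundary terms produced by splitting off the index $0$ in $k^tGk$. The paper's write-up is slightly more compact (it packages your $S$ as $\mathbf{1}^t k_{0^c}$ and transforms $k^tGk$ directly into $-4k_{0^c}^t(\Delta_{0^c0^c})^{-1}k_{0^c}$ rather than expanding both sides separately), but the algebraic content is identical.
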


\begin{proof}
Let $\mathbf{1}=(1)_{i\in\Lambda\setminus\{0\}}$ denote the column vector consisting of 
ones. For $k\in\K_\Lambda$ we have by Lemma \ref{lem:inverse_hamiltonian}
\begin{align}
k^tG k&=
k_{0^c}^tG_{0^c0^c} k_{0^c}+
k_{0^c}^tG_{0^c0} k_0+k_0G_{00^c} k_{0^c}+k_0G_{00} k_0
\nonumber\\&
=
k_{0^c}^t(G_{0^c0^c}
-G_{0^c0}\mathbf{1}^t-\mathbf{1}G_{00^c}+\mathbf{1}G_{00}\mathbf{1}^t) k_{0^c}
=-4k_{0^c}^t(\Delta_{0^c0^c})^{-1}k_{0^c}.
\end{align}
Substituting this in \eqref{eq:Z-stern} and \eqref{def:GibbsCG}, the second claim also follows.
\end{proof}

\section{Peierls argument and breaking of symmetry in the discrete Gaussian model}
The following results follow the lines of Section 6.3 in \cite{MR2807681}, where a thorough discussion 
regarding the symmetries and the ground states of the discrete Gaussian model (without pinning) based on \cite{MR691041} is given.  While Peierls arguments on planar domains are classical, the treatment of non-zero homologies are less common.

We construct a contour model as follows. Let $\Lstar:=\Lambda+(1/2, 1/2)$ be the dual box with periodic boundary conditions and $\cE_{\Lstar}$ the set of edges between nearest-neighbor points in $\Lstar$.  Let $x\in\Omega_{\Lambda}$ 
and $\{i,j\}\in\cE_\Lambda$. There exists a unique $\{i,j\}^*\in\cE_{\Lstar}$ intersecting $\{i,j\}$. Then, if $x_i-x_j=n\in\N$, we draw $n$ distinct arrows on $\{i,j\}^*$ such that when one looks in the direction of the arrows, the vertex $i$ is to the left and the vertex $j$ to the right. In other words, the larger value is to the left of the arrow and the smaller one to the right. For a dual edge $e$ directed from dual vertex $e_-$ to $e_+$, we denote its direction vector by $\vec{e}:=e_+-e_-$.

Take now two different vertices $i, j\in\Lambda$ and fix a configuration $x$ in the set $\Omega_{i,j}:=\{x\in\Omega_\Lambda: x_i>x_j\}$. Consider the set $M_i:=\{k\in\Lambda: x_k\geq x_i\}$. Let $C_i$ be the connected component of $M_i$ containing $i$. Let $\partial C_i$ be its boundary, viewed as a set of directed edges in the dual lattice, where the direction follows the same rule as for the arrows described above. The connected components of $\partial C_i$ consist of closed contours; at the intersection of four dual edges belonging to contours we use the North-West/South-East deformation rule (see e.g. Fig. 3.11 in \cite{friedli-velenik2018}) to generate self-avoiding contours. 

Zero or two of the connected components of $\partial C_i$ may wind around the torus (let's call them $\gamma_{-}$ and $\gamma_{+}$, if they exist), but all other contours $\gamma_1, \ldots, \gamma_m$ do not wind around it. One can see this as follows. Winding connected components $\gamma$ are characterized by their period $p(\gamma):=\sum_{e\in\gamma}\vec{e}$ being non-zero. The period characterizes the homology class of $\gamma$. Due to the periodic boundary conditions, for any set of vertices $C\subseteq\Lambda$, the balance equation $\sum_{e\in\partial C}\vec{e}=0$ holds. This shows that there are either zero or at least two winding connected components of $\partial C_i$.  The period $p(\gamma)$ is the asymptotic direction of any lifting $\hat{\gamma}$ of $\gamma$ to a two-sided infinite path in $\Z^2$. Here a lifting $\hat{\gamma}$ means a connected component of the inverse image of $\gamma$ w.r.t.\ the canonical map $\Z^2+(1/2, 1/2)\to\Lstar$. Because the liftings $\hat{\gamma}$ and $\hat{\gamma}'$ of any two different connected components $\gamma$ and $\gamma'$ of $\partial C_i$ are disjoint, their periods $p(\gamma), p(\gamma')$ are linearly dependent. Assume that we have at least three different infinite connected components $\hat{\gamma}, \hat{\gamma}', \hat{\gamma}''$ in the boundary $\partial \hat{C_i}$ of a lifting $\hat{C_i}$ of $C_i$. Since their asymptotic directions are pairwise linearly dependent, two of them are separated by the third, a contradiction. 

Therefore there are two cases, illustrated in Figure \ref{figure}:
\begin{itemize}
\item \textit{Case 1, connected cycles: }either precisely one of the contours $\gamma_1, \ldots, \gamma_m$, called $\gamma_{i,j}$, separates $i$ from $j$, or
\item \textit{Case 2, winding pairs: }$\gamma_{-}$ and $\gamma_{+}$ exist and their union $\gamma_{i,j}:=\gamma_{-}\cup \gamma_{+}$ separates $i$ and $j$.
\end{itemize} 

Let $\cL_{i}(\gamma_{i,j})$ denote the set of all vertices $k\in\Lambda$ such that $\gamma_{i,j}$ does not separate $i$ from $k$. Note that $\cL_{i}(\gamma_{i,j}(x))$ depends on the configuration $x$ only through $\gamma_{i,j}(x)$.
Consider the mapping 
\begin{align}
F_{i,j}:\Omega_{i,j}\longrightarrow \Omega_{\Lambda}, \qquad
x\longmapsto F_{i,j}(x),
\end{align}
where $F_{i,j}(x)_l=x_l'-x_0'$, with $x_l'=x_l-1$ for $k\in\cL_{i}(\gamma_{i,j}(x))$ and $x_l'=x_l$, otherwise. In other words, the map $F_{i,j}$ lowers the configuration $x$ in the region bounded by the oriented contour(s) $\gamma_{i,j}$. In compact notation, $F_{i,j}(x)_l=x_l-\mathbf{1}_{\cL_{i}(\gamma_{i,j}(x))}(l)-x_0+\textbf{1}_{\cL_{i}(\gamma_{i,j}(x))}(0)$. Note that for any edge $\{l,m\}\in\cE_\Lambda$ one has 
\begin{align}\label{eq:F_ij}
|F_{i,j}(x)_l-F_{i,j}(x)_m|=|x_l-x_m|-\mathbf{1}_{\{\{l,m\}^*\in\gamma_{i,j}(x) \}}.
\end{align}
Here, the event ${\{\{l,m\}^*\in\gamma_{i,j} \}}$ means that one dual edge in $\gamma_{i,j}$ intersects the edge $\{l,m\}$.

\begin{figure}[ht]
\centering\includegraphics[width=0.6\textwidth] {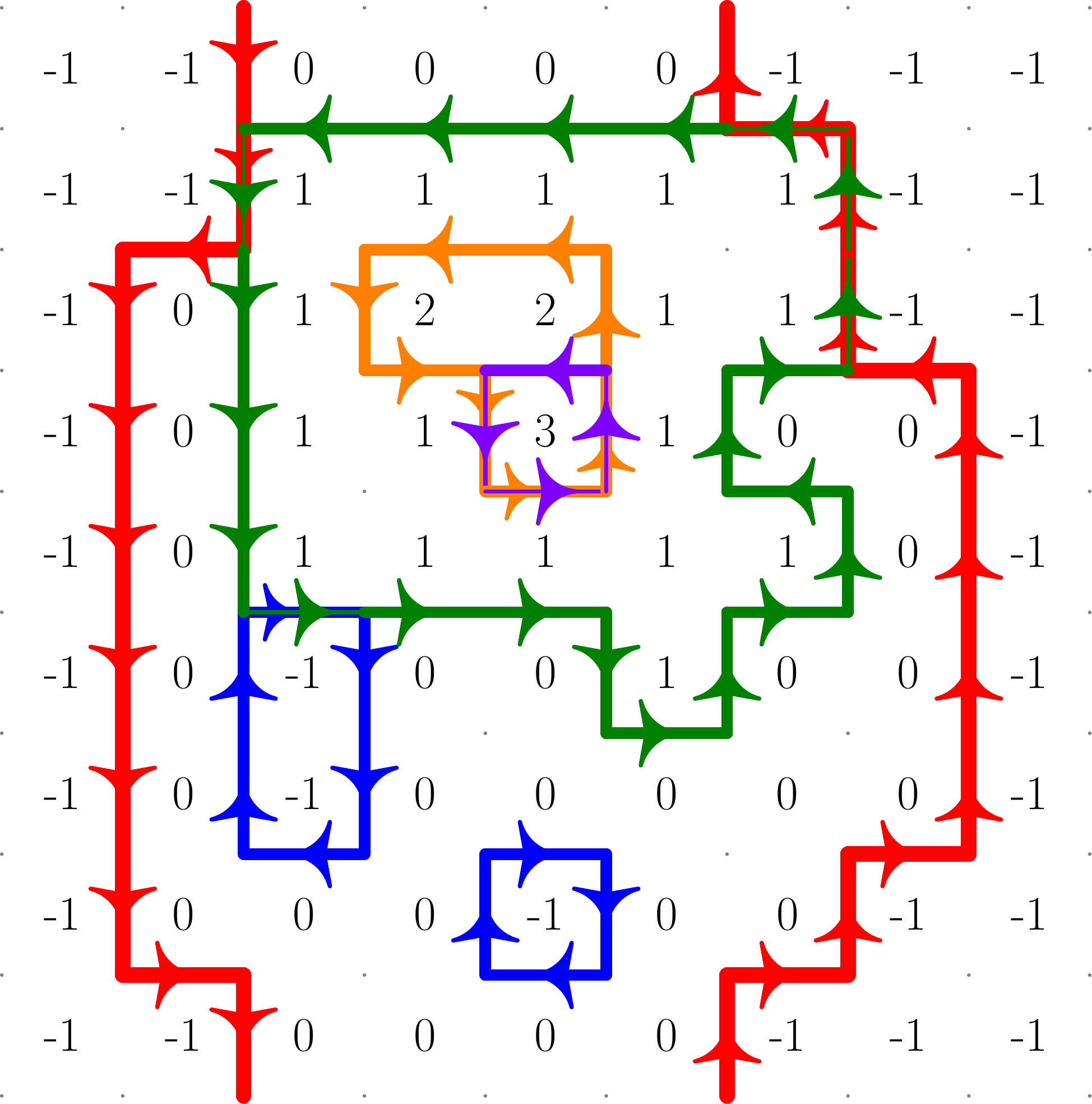}
\caption{Possible spin and contour configuration. Here, case 1 occurs for $x_i=1$ and $x_j=0$ with the green contour $\gamma_{i,j}$, while case 2 occurs for $x_i=0$ and $j$ in the left lower corner with $\gamma_{i,j}$ the union of the red contours.}\label{figure}
\end{figure}

Define $\Gamma_{i,j}:=\{\gamma_{i,j}(x): x\in\Omega_{i,j}\}$. The length $|\gamma_{i,j}|$ of a contour/pair of contours $\gamma_{i,j}\in\Gamma_{i,j}$ is defined to be its total number of edges. 
\begin{lemma}[Peierls argument] Let $\beta>0$ and $i,j\in\Lambda$ with $i\neq j$. Then for any $x\in \Omega_{i,j}$ one has
\begin{equation} \label{eq:hamiltonian_contour_inequality}
H_{\Lambda}(x)-H_{\Lambda}(F_{i,j}(x))\geq |\gamma_{i,j}(x)|. 
\end{equation}
Further, for any $\gamma\in\Gamma_{i,j}$ and $k\in \N$, the following estimate holds
\begin{align}\label{eq:comparison_F_gamma}
P_{\Lambda,\beta}(x_i-x_j\geq k, \gamma_{i,j}(x)=\gamma)\leq \ee^{-\beta|\gamma|}P_{\Lambda,\beta}(x_i-x_j\geq k-1).
\end{align}
In particular,
\begin{equation}\label{eq:comparison_F_gamma_sum}
P_{\Lambda,\beta}(x_i-x_j\geq k)\leq P_{\Lambda,\beta}(x_i-x_j\geq k-1) \sum_{\gamma\in\Gamma_{i,j}} \ee^{-\beta|\gamma|}.
\end{equation}
Hence, we have the inequality
\begin{equation}\label{eq:comparison_F_gamma_iterated}
P_{\Lambda,\beta}(x_i-x_j\geq k)\leq \bigg(\sum_{\gamma\in\Gamma_{i,j}} \ee^{-\beta|\gamma|}\bigg)^k.
\end{equation}
\end{lemma}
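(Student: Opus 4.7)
The plan is to prove the three claims in order, using \eqref{eq:F_ij} as the single combinatorial input. For \eqref{eq:hamiltonian_contour_inequality}, I would expand the Hamiltonian difference edge by edge: by \eqref{eq:F_ij}, the summand $(x_l-x_m)^2-(F_{i,j}(x)_l-F_{i,j}(x)_m)^2$ vanishes whenever $\{l,m\}^*\notin\gamma_{i,j}(x)$. For an edge with $\{l,m\}^*\in\gamma_{i,j}(x)$, an arrow is drawn on $\{l,m\}^*$, so $a:=|x_l-x_m|\ge 1$, and \eqref{eq:F_ij} gives $|F_{i,j}(x)_l-F_{i,j}(x)_m|=a-1$; the contribution is thus $a^2-(a-1)^2=2a-1\ge 1$. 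Summing over the $|\gamma_{i,j}(x)|$ contributing edges yields \eqref{eq:hamiltonian_contour_inequality}.

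For \eqref{eq:comparison_F_gamma} I would fix $\gamma\in\Gamma_{i,j}$ and restrict $F_{i,j}$ to $A_{k,\gamma}:=\{x\in\Omega_{i,j}:x_i-x_j\ge k,\,\gamma_{i,j}(x)=\gamma\}$. On this set, $\cL_i(\gamma_{i,j}(x))=\cL_i(\gamma)$ is constant, so the restriction is explicitly $F_{i,j}(x)_l=x_l-\mathbf{1}_{\cL_i(\gamma)}(l)+\mathbf{1}_{\cL_i(\gamma)}(0)$, which is invertible pointwise and hence injective. Since $\gamma$ separates $i$ from $j$, we have $i\in\cL_i(\gamma)$ and $j\notin\cL_i(\gamma)$, so $F_{i,j}(x)_i-F_{i,j}(x)_j=x_i-x_j-1\ge k-1$; thus $F_{i,j}(A_{k,\gamma})\subseteq\{y\in\Omega_\Lambda:y_i-y_j\ge k-1\}$. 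Applying \eqref{eq:hamiltonian_contour_inequality} at the level of Boltzmann weights and then substituting $y=F_{i,j}(x)$,
\begin{align*}
Z_{\Lambda,\beta}\,P_{\Lambda,\beta}(x_i-x_j\ge k,\gamma_{i,j}(x)=\gamma)
&=\sum_{x\in A_{k,\gamma}}\ee^{-\beta H_\Lambda(x)}
\le \ee^{-\beta|\gamma|}\sum_{x\in A_{k,\gamma}}\ee^{-\beta H_\Lambda(F_{i,j}(x))}\\
&\le \ee^{-\beta|\gamma|}\sum_{y\in\Omega_\Lambda:\,y_i-y_j\ge k-1}\ee^{-\beta H_\Lambda(y)}
=\ee^{-\beta|\gamma|}Z_{\Lambda,\beta}\,P_{\Lambda,\beta}(x_i-x_j\ge k-1),
\end{align*}
which gives \eqref{eq:comparison_F_gamma} after dividing by $Z_{\Lambda,\beta}$.

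Summing \eqref{eq:comparison_F_gamma} over $\gamma\in\Gamma_{i,j}$ produces \eqref{eq:comparison_F_gamma_sum}, and \eqref{eq:comparison_F_gamma_iterated} then follows by applying \eqref{eq:comparison_F_gamma_sum} recursively $k$ times, with the trivial bound $P_{\Lambda,\beta}(x_i-x_j\ge 0)\le 1$ at the base. The main subtlety I anticipate is ensuring that the above argument is genuinely uniform across the two geometric cases distinguished before the lemma (a single separating contour versus a disjoint pair of winding contours on the torus); once $\cL_i(\gamma)$ is interpreted consistently as the set of vertices not separated from $i$ by $\gamma$, the explicit formula for the restricted $F_{i,j}$ and the identity \eqref{eq:F_ij} (which is taken as given) cover both cases without modification.
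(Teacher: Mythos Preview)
Your proof is correct and follows essentially the same route as the paper's. The paper packages the edge-by-edge estimate for \eqref{eq:hamiltonian_contour_inequality} slightly more compactly by writing $(x_l-x_m)^2=(|F_{i,j}(x)_l-F_{i,j}(x)_m|+\mathbf{1}_{\{\{l,m\}^*\in\gamma_{i,j}(x)\}})^2\ge (F_{i,j}(x)_l-F_{i,j}(x)_m)^2+\mathbf{1}_{\{\{l,m\}^*\in\gamma_{i,j}(x)\}}$ and summing, but this is exactly your case split; the injectivity-plus-range argument for \eqref{eq:comparison_F_gamma} and the subsequent summation and iteration are identical, and your remark that the two contour cases are handled uniformly via $\cL_i(\gamma)$ and \eqref{eq:F_ij} is accurate.
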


\begin{proof}
Given an edge $\{l,m\}\in\cE_\Lambda$ and $x\in\Omega_{i,j}$, formula \eqref{eq:F_ij} implies 
\begin{align}
(x_l -x_m)^2&=(|F_{i,j}(x)_l-F_{i,j}(x)_m|+\textbf{1}_{\{\{l,m\}^*\in\gamma_{i,j}(x)\}})^2 \nonumber \\
&\geq(F_{i,j}(x)_l-F_{i,j}(x)_m)^2+\textbf{1}_{\{\{l,m\}^*\in\gamma_{i,j}(x)\}} . \label{eq:contour_ineq}
\end{align}
Summing over $\{l,m\}\in\cE_\Lambda$ in \eqref{eq:contour_ineq} yields  \eqref{eq:hamiltonian_contour_inequality}. 

Let $\gamma\in\Gamma_{i,j}$. Note that $F_{i,j}$ restricted to $\{x_i-x_j\geq k, \gamma_{i,j}(x)=\gamma\}$ maps one-to-one into $\{x_i-x_j\geq k-1\}$ with the inverse $x\mapsto (x_l+\mathbf{1}_{\cL_{i}(\gamma)}(l)-x_0-\textbf{1}_{\cL_{i}(\gamma)}(0))_{l\in\Lambda}$.
By plugging formula \eqref{eq:hamiltonian_contour_inequality} into the definition of $P_{\Lambda,\beta}$ we obtain claim \eqref{eq:comparison_F_gamma} as follows:
\begin{align}
&P_{\Lambda,\beta}(x_i-x_j\geq k, \gamma_{i,j}(x)=\gamma)=\frac{1}{Z_{\Lambda,\beta}}\sum_{\substack{x:\; x_i-x_j\geq k, \\ \gamma_{i,j}(x)=\gamma}} \ee^{-\beta H_\Lambda(x)}\leq\frac{1}{Z_{\Lambda,\beta}}\sum_{\substack{x:\; x_i-x_j\geq k,\\ \gamma_{i,j}(x)=\gamma}} \ee^{-\beta|\gamma|} \ee^{-\beta H_\Lambda(F_{i,j}(x))} \nonumber\\
&=\ee^{-\beta|\gamma|}P_{\Lambda,\beta}(F_{i,j}(\{x_i-x_j\geq k,\gamma_{i,j}(x)=\gamma\}))\leq \ee^{-\beta|\gamma|}P_{\Lambda,\beta}(x_i-x_j\geq k-1) .
\end{align}
Summing this over $\gamma\in\Gamma_{i,j}$ yields \eqref{eq:comparison_F_gamma_sum}. 
The last claim \eqref{eq:comparison_F_gamma_iterated} follows by iterating \eqref{eq:comparison_F_gamma_sum}.
\end{proof}

In the following, $\Lambda\Subset \Z^2$ means that $\Lambda$ is an $N\times N$ box  of length $N\geq 4$ with periodic boundary conditions containing the origin.
\begin{proposition}[Counting contours]
Given $\Lambda\Subset \Z^2$, let $i,j\in\Lambda$ with $i\neq j$ and $\ell\in\N$. The number of contours/pairs of contours in $\Gamma_{i,j}$ of length $\ell$ is bounded by
\begin{align}
|\{\gamma\in\Gamma_{i,j}: |\gamma|=\ell\}|\leq 3\ell^23^{\ell}.
\end{align}
For all $\beta\geq \log 6$ and $k\in\N$ we have
\begin{equation}\label{eq:Peierls_gen}
\sup_{\Lambda\Subset\Z^2}\sup_{i,j\in\Lambda} P_{\Lambda,\beta}(|x_i - x_j|\geq k)\leq 2\varphi(\beta)^k,
\end{equation}
where $\varphi(\beta):=480(3\ee^{-\beta})^4$. 

\end{proposition}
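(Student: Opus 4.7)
The strategy is to reduce the probability bound to the combinatorial counting of separating contours via the iterated Peierls estimate \eqref{eq:comparison_F_gamma_iterated} together with a union bound for the absolute value. The counting itself is the main task, and I would decompose it along the two cases distinguished in the contour construction before the proposition.

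For Case~1, where $\gamma_{i,j}$ is a single planar self-avoiding dual polygon enclosing (say) $i$ but not $j$, I would apply the classical Peierls argument: a contour of length $\ell$ enclosing $i$ must contain a dual vertex within distance $\ell/4$ of $i$, so there are at most $O(\ell)$ possible anchor positions on a fixed ray emanating from $i$; from each anchor, the contour corresponds to a self-avoiding closed walk of length $\ell$, of which there are at most $4\cdot 3^{\ell-1}$ (three non-backtracking directions at each subsequent step). This yields $O(\ell\cdot 3^{\ell})$ contours in Case~1. For Case~2, where $\gamma_{i,j}=\gamma_-\cup\gamma_+$ is a pair of disjoint winding contours of matching homology, the decisive observation is that each winding contour has length at least $N$, so the total length satisfies $\ell\geq 2N$. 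I would then sum over the two homology directions, anchor each of $\gamma_\pm$ at its intersection with a fixed transversal line through $i$ (at most $N$ anchor positions per contour), and apply the self-avoiding walk bound $3^{|\gamma_\pm|-1}$ to the shape of each contour; this produces at most $2N^2\cdot 3^{\ell}$ winding pairs, which is absorbed into $\tfrac12\ell^2\cdot 3^{\ell}$ via $N\leq\ell/2$. Combining the two cases gives the claimed uniform bound $3\ell^2\cdot 3^{\ell}$.

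For the probability part, substituting the counting bound into \eqref{eq:comparison_F_gamma_iterated} gives
\[
P_{\Lambda,\beta}(x_i-x_j\geq k)\leq\Big(\sum_{\ell\geq 4}3\ell^2(3\ee^{-\beta})^\ell\Big)^k.
\]
For $\beta\geq\log 6$ one has $q:=3\ee^{-\beta}\leq 1/2$, and an elementary calculation (writing $\ell=m+4$ and using $\sum_{m\geq 0}(m+4)^2(1/2)^m=54$) yields $\sum_{\ell\geq 4}3\ell^2 q^\ell\leq 162\,q^4\leq 480\,q^4=\varphi(\beta)$. By the symmetry of the contour construction under interchange of $i$ and $j$, the same estimate holds for $P_{\Lambda,\beta}(x_j-x_i\geq k)$, and the bound $2\varphi(\beta)^k$ for $|x_i-x_j|\geq k$ follows by a union bound.

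The main obstacle will be the Case~2 counting on the torus: a naive estimate would produce positional factors scaling with the torus side length $N$ and thereby destroy the supremum over $\Lambda\Subset\Z^2$. The resolution lies in the length lower bound $\ell\geq 2N$ for winding pairs, which allows the $\ell^2$ prefactor to absorb the $N^2$ positional degrees of freedom and produce an estimate uniform in the torus size; careful bookkeeping of the two homology directions, the orientations, and the mutual disjointness of the two contours is also needed to secure the constant in front of $\ell^2$.
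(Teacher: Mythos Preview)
Your proposal is correct and follows essentially the same route as the paper: the standard anchored self-avoiding polygon count for Case~1, the key observation $\ell\geq 2N$ in Case~2 allowing the $O(N^2)$ anchor freedom to be absorbed into the $\ell^2$ prefactor, and then the geometric series in $q=3\ee^{-\beta}$ fed into \eqref{eq:comparison_F_gamma_iterated}. One refinement worth noting: instead of summing over ``two homology directions'' and choosing a single transversal line for each, the paper anchors every winding contour at its intersection with the union of the $x$- and $y$-axes (size $2N$), which covers \emph{all} nonzero homology classes at once---on an $N\times N$ torus, primitive classes such as $(1,1)$ can occur once $\ell\geq 2N$, so your dichotomy into exactly two directions is not quite exhaustive; for the final union bound the paper invokes the global spin-flip symmetry $x\mapsto -x$ of $P_{\Lambda,\beta}$ rather than the $i\leftrightarrow j$ symmetry of the contour map.
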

\begin{proof}

The number of connected cycles (Case 1 above) is, as in the Ising model case (see \cite{friedli-velenik2018}), bounded by $\frac23\ell 3^{\ell}$, for all $\ell\geq 4$. We note that the number of winding pairs $\gamma_{+}\cup\gamma_{-}$ (Case 2 above) is non-null only if their total length $\ell$ is larger or equal to twice the box length $N$ (at least one $N$ for each contour $\gamma_\pm$). Moreover, it is bounded by $(2N)^2\cdot 4^2\cdot 3^{\ell-2}$. One can see this as follows: each contour $\gamma_{+}$ and $\gamma_{-}$ meets the $x-$axis or $y-$axis on the torus in at least one point ($\leq(2N)^2$ choices). Starting from this point, there are four choices for the next step ($4^2$ choices together). For the remaining $\ell-2$ steps, starting with $\gamma_{+}$, there are at most $3$ choices per step, finishing $\gamma_{+}$ and starting with $\gamma_{-}$ as soon as the path becomes closed. Altogether, this means 
\begin{equation}
|\{\gamma\in\Gamma_{i,j}: |\gamma|=\ell\}|\leq\frac23\ell 3^{\ell}+\mathbf{1}_{\{\ell\geq 2N\}} 64 N^2 3^{\ell-2}\leq 3\ell^23^{\ell}.
\end{equation}
Take $\beta\geq \log 6$, which means $3\ee^{-\beta}\leq \frac12$. Summing over $\ell\geq 4$, one obtains
\begin{equation}
\sum_{\gamma\in\Gamma_{i,j}} \ee^{-\beta|\gamma|}\leq \sum_{\ell\geq 4}\ee^{-\beta\ell}|\{\gamma\in\Gamma_{i,j}: |\gamma|=\ell\}|\leq 3\sum_{\ell\geq 4} \ell^2(3\ee^{-\beta})^\ell  \leq 480(3\ee^{-\beta})^4,
\end{equation}
where we have used $\sum_{\ell=4}^\infty 3 \ell^2 z^\ell = 3 z^4 (9 z^2 - 23 z + 16)(1-z)^{-3}\leq 480 z^4$ for $0\leq z\leq 1/2$. We haven't tried to optimize the constant.

Using first that $P_{\Lambda,\beta}$ is symmetric with respect to the reflection $x\mapsto -x$, and then inequality~\eqref{eq:comparison_F_gamma_iterated} we conclude for $k\in\N$
\begin{align}
\sup_{\substack{\Lambda\Subset\Z^2\\N\geq 4}}\sup_{i,j\in\Lambda} P_{\Lambda,\beta}(|x_i - x_j|\geq k)&\leq 2 \sup_{\substack{\Lambda\Subset\Z^2\\N\geq 4}}\sup_{i,j\in\Lambda} P_{\Lambda,\beta}(x_i - x_j\geq k)\nonumber \\ 
&\leq 2\sup_{\substack{\Lambda\Subset\Z^2\\N\geq 4}}\sup_{i,j\in\Lambda}\bigg(\sum_{\gamma\in\Gamma_{i,j}} \ee^{-\beta|\gamma|}\bigg)^k\leq 2\varphi(\beta)^k.
\end{align}
\end{proof}

For $i,j\in\Lambda$ we define the observable $\obs_{i,j}\colon\Omega_\Lambda^\R\to\R$, 
\begin{align}
\obs_{i,j}(x)=(x_i-x_j)^2. 
\end{align}

\begin{corollary}[Spontaneous breaking of the internal $\Z^1$ symmetry] 
\label{cor:symmetry_breaking}
For all $\beta\geq 3$ one has 
\begin{equation}\label{eq:bound_square_mean}
\sup_{\Lambda\Subset\Z^2}\sup_{i,j\in\Lambda} E_{\Lambda,\beta}[\obs_{i,j}(x)]\le M_\beta:=\frac{2\varphi(\beta)(1+\varphi(\beta))}{(1-\varphi(\beta))^3}.
\end{equation}
The bound $M_\beta$ is asymptotically equivalent to $2\varphi(\beta)=960(3\ee^{-\beta})^4$ as $\beta\to\infty$. In particular,
\begin{equation}\label{eq:Mbeta-asymptotics}
\lim_{\beta\to\infty} \beta M_\beta=0.
\end{equation}
\end{corollary}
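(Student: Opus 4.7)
The plan is to convert the exponential tail estimate \eqref{eq:Peierls_gen} into a bound on the second moment by a simple summation, and then extract the asymptotic behaviour as $\beta\to\infty$. Since the tail bound is already uniform in $\Lambda$ and in $i,j$, the same uniformity will be automatic for the second moment.

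First I would fix $\Lambda\Subset\Z^2$ and $i,j\in\Lambda$ and expand
\begin{equation}
E_{\Lambda,\beta}[\obs_{i,j}(x)] = \sum_{n=1}^\infty n^2\, P_{\Lambda,\beta}(|x_i-x_j|=n) \leq \sum_{n=1}^\infty n^2\, P_{\Lambda,\beta}(|x_i-x_j|\geq n).
\end{equation}
Since $\beta\geq 3>\log 6$, the Peierls tail bound \eqref{eq:Peierls_gen} applies and gives $P_{\Lambda,\beta}(|x_i-x_j|\geq n)\leq 2\varphi(\beta)^n$, uniformly in $\Lambda$, $i$, and $j$. Hence the second moment is at most $2\sum_{n\geq 1}n^2\varphi(\beta)^n$.

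The next step is to evaluate this series. Using the standard generating-function identity $\sum_{n\geq 1}n^2 z^n=z(1+z)/(1-z)^3$ for $|z|<1$ with $z=\varphi(\beta)$ gives exactly $M_\beta$. The only numerical point to verify is that $\varphi(\beta)<1$ throughout the range considered: since $\varphi$ is strictly decreasing in $\beta$, it suffices to check $\varphi(3)=480(3e^{-3})^4<1$, which is a one-line computation. This establishes \eqref{eq:bound_square_mean}.

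For the asymptotic statements, I would observe that $\varphi(\beta)\to 0$ as $\beta\to\infty$, so the factor $(1+\varphi(\beta))/(1-\varphi(\beta))^3\to 1$ and therefore $M_\beta\sim 2\varphi(\beta)=960(3e^{-\beta})^4$. Multiplying by $\beta$, the product behaves as a constant multiple of $\beta e^{-4\beta}$, which tends to zero, yielding \eqref{eq:Mbeta-asymptotics}. There is no real obstacle here: the argument reduces to a geometric-type series downstream of the already-established Peierls estimate, and the only thing to watch is the convergence criterion $\varphi(\beta)<1$, which is comfortably satisfied for $\beta\geq 3$.
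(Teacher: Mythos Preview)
Your argument is correct and essentially identical to the paper's own proof: both expand the second moment as $\sum_{n\ge1} n^2 P_{\Lambda,\beta}(|x_i-x_j|=n)\le \sum_{n\ge1} n^2 P_{\Lambda,\beta}(|x_i-x_j|\ge n)$, insert the Peierls tail bound \eqref{eq:Peierls_gen}, and evaluate the resulting series via $\sum_{n\ge1}n^2 z^n=z(1+z)/(1-z)^3$. Your additional remarks on monotonicity of $\varphi$ and the asymptotics $M_\beta\sim 2\varphi(\beta)$ are just slightly more explicit versions of what the paper records.
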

\begin{proof} Let $\Lambda\Subset\Z^2$ and $i,j\in\Lambda$. For all $\beta\geq 3$ one has $\varphi(\beta)<1$ and we obtain
\begin{align}
0\leq E_{\Lambda,\beta}[\obs_{i,j}(x)]=&\sum_{n\in\N} n^2 P_{\Lambda,\beta}(|x_i-x_j|=n)\nonumber\\
\leq&\sum_{n\in\N} n^2 P_{\Lambda,\beta}(|x_i-x_j|\geq n)\nonumber\\
\leq&2\sum_{n\in\N} n^2 \varphi(\beta)^n=\frac{2\varphi(\beta)(1+\varphi(\beta))}{(1-\varphi(\beta))^3}=M_\beta<\infty. \label{eq:Upper_bound_beta}
\end{align}
Since $\varphi$ neither depends on $i,j$, nor on $\Lambda$, the above estimates 
imply the desired conclusion ~\eqref{eq:bound_square_mean}.
\end{proof}

\section{Main result}
For $i,j\in\Lambda$ define the observable $U_{ij}:\R^\Lambda\to\R$ by
\begin{align}
U_{ij}(k):=\sum_{\ell\in\Lambda} (G_{i\ell}-G_{j\ell})k_\ell.
\end{align}
We interpret $\sum_{\ell\in\Lambda} G_{i\ell}k_\ell$ as the electric potential at location $i$ of a charge distribution encoded by $(k_\ell)_{\ell\in\Lambda}$. In this interpretation, $U_{ij}$ encodes the voltage between $i$ and $j$. We remark that the thermodynamic limit of the potential kernel $G_{ii}-G_{ij}$ is well-defined cf. Theorem 1.6.1 in \cite{MR2985195}. Since $G_{ii}-G_{ij}\sim C\log|i-j|$ in the double limit $\Lambda\nearrow\Z^2$ and then $|i-j|\to\infty$, the following result states that the variance of the energy cost of transporting a unit charge from $i$ to $j$ asymptotically behaves like $c(\beta^*)^{-1}\log|i-j|$, for some positive constants $C,c$.

\begin{theorem}[Variance of voltages in a lattice Coulomb gas]
\label{thm:main}  For all $N\times N$ boxes $\Lambda\ni 0$  of length $N\geq 4$ with periodic boundary conditions, all $i,j\in\Lambda$, and all $\beta^*\leq \frac{1}{12}$ one has
\begin{equation}\label{eq:energy-cost}
\frac{4}{\beta^*}(G_{ii}-G_{ij})-\frac{4}{(\beta^*)^2} M_{(4\beta^{*})^{-1}}\leq \var^*_{\Lambda,\beta^*}(U_{ij})= E^*_{\Lambda,\beta^*}[U_{ij}^2]\leq \frac{4}{\beta^*}(G_{ii}-G_{ij}),
\end{equation}
where $E^*_{\Lambda,\beta^*}$ and $\var^*_{\Lambda,\beta^*}$ denote the expectation and the variance with respect to $P_{\Lambda,\beta^*}^*$, respectively. As a consequence, we obtain the following asymptotic equivalence
\begin{equation}
E^*_{\Lambda,\beta^*}[U_{ij}^2]\sim \frac{4}{\beta^*}(G_{ii}-G_{ij}) \; \text{ as } \; \beta^*\to0.
\end{equation} 
\end{theorem}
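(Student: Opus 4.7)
The plan is to run the Poisson-summation duality of Section 2 with an imaginary linear source, obtaining an exact identity between the characteristic function of $x_i-x_j$ under $P_{\Lambda,\beta}$ and the Laplace transform of $U_{ij}$ under $P^*_{\Lambda,\beta^*}$. Differentiating this identity twice at zero then expresses $E^*_{\Lambda,\beta^*}[U_{ij}^2]$ as the difference between a leading Gaussian term and a correction governed by $E_{\Lambda,\beta}[(x_i-x_j)^2]$, which Corollary \ref{cor:symmetry_breaking} already controls by $M_\beta$.

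More precisely, for $\lambda\in\R$ apply Fact \ref{fact:PoissonSummationFormula} to the twisted Gaussian $f(x)=\ee^{-\beta x^t(-\Delta_{0^c0^c})x+\ii\lambda a_{0^c}^t x}$ on $\R^{|\Lambda|-1}$, where $a_{0^c}$ denotes the restriction to $\Lambda\setminus\{0\}$ of $a:=e_i-e_j\in\K_\Lambda$; note that $a_{0^c}^t x=x_i-x_j$ whenever $x_0=0$, which also handles the cases $i=0$ or $j=0$. Since $f$ differs from $g(x)=\ee^{-\beta x^t(-\Delta_{0^c0^c})x}$ only by a unitary character, one has $\hat f(k)=\hat g(k-\lambda a_{0^c})$, so the decay hypotheses of Fact \ref{fact:PoissonSummationFormula} are met uniformly for bounded $\lambda$. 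Expanding $(k_{0^c}-\lambda a_{0^c})^t(-\Delta_{0^c0^c})^{-1}(k_{0^c}-\lambda a_{0^c})$, combining Corollary \ref{cor:rel-G-delta-inverse} with the polarization identity $a^tGk=U_{ij}(k)$ (valid for $k\in\K_\Lambda$) and the scalar computation $a_{0^c}^t(-\Delta_{0^c0^c})^{-1}a_{0^c}=\tfrac12(G_{ii}-G_{ij})$ (from Lemma \ref{lem:inverse_hamiltonian} and the translation invariance $G_{ii}=G_{jj}$), and finally dividing through by the partition-function duality \eqref{eq:Z-poisson}, produces the source-level identity
\begin{align*}
E_{\Lambda,\beta}\bigl[\ee^{\ii\lambda(x_i-x_j)}\bigr]=\ee^{-\frac{\beta^*\lambda^2}{2}(G_{ii}-G_{ij})}\,E^*_{\Lambda,\beta^*}\Bigl[\ee^{\frac{\lambda\beta^*}{2}U_{ij}(k)}\Bigr].
\end{align*}

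Both sides extend to entire analytic functions of $\lambda$, since the weights $\ee^{-\beta H_\Lambda(x)}$ and $\ee^{-(\beta^*/4)k^tGk}$ have Gaussian decay, so one can differentiate twice at $\lambda=0$. The symmetries $x\mapsto-x$ and $k\mapsto-k$ of the two Gibbs measures give $E_{\Lambda,\beta}[x_i-x_j]=0$ and $E^*_{\Lambda,\beta^*}[U_{ij}]=0$; the latter already yields $\var^*_{\Lambda,\beta^*}(U_{ij})=E^*_{\Lambda,\beta^*}[U_{ij}^2]$, and the first-derivative cross terms in Leibniz's rule vanish. The identity then reduces to
\begin{align*}
E^*_{\Lambda,\beta^*}[U_{ij}^2]=\frac{4}{\beta^*}(G_{ii}-G_{ij})-\frac{4}{(\beta^*)^2}\,E_{\Lambda,\beta}[(x_i-x_j)^2].
\end{align*}
Non-negativity of the last term gives the upper bound in \eqref{eq:energy-cost}. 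For $\beta^*\le\tfrac{1}{12}$ one has $\beta=(4\beta^*)^{-1}\ge 3$, so Corollary \ref{cor:symmetry_breaking} bounds $E_{\Lambda,\beta}[(x_i-x_j)^2]\le M_\beta=M_{(4\beta^*)^{-1}}$, producing the lower bound. The asymptotic equivalence follows by factoring out $4(G_{ii}-G_{ij})/\beta^*$ and using $4\beta M_\beta/(G_{ii}-G_{ij})\to 0$ as $\beta\to\infty$ by \eqref{eq:Mbeta-asymptotics}.

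The main technical point is setting up the source-level Poisson-summation identity consistently with the normalization conventions of both models, and pushing the small computation $a_{0^c}^t(-\Delta_{0^c0^c})^{-1}a_{0^c}=\tfrac12(G_{ii}-G_{ij})$ through Lemma \ref{lem:inverse_hamiltonian}; once that identity is in hand, the rest of the argument is a two-line Taylor expansion combined with a direct appeal to the Peierls-type estimate of Corollary \ref{cor:symmetry_breaking}.
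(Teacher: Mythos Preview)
Your argument is correct and leads to exactly the same key identity as the paper, namely
\[
E_{\Lambda,\beta}\bigl[(x_i-x_j)^2\bigr]=\beta^*(G_{ii}-G_{ij})-\tfrac{(\beta^*)^2}{4}\,E^*_{\Lambda,\beta^*}[U_{ij}^2],
\]
after which both proofs invoke Corollary~\ref{cor:symmetry_breaking} in the same way. The only difference is cosmetic packaging: the paper multiplies by $(x_i-x_j)^2$ before taking the Fourier transform, implementing this as $-(\partial_{k_i}-\partial_{k_j})^2$ acting on $\hat g$ and then computing that derivative explicitly on $\ee^{-(\beta^*/4)k^tGk}$; you instead insert a source $\ee^{\ii\lambda(x_i-x_j)}$, apply Poisson summation to the shifted Gaussian to obtain the generating-function identity
\[
E_{\Lambda,\beta}\bigl[\ee^{\ii\lambda(x_i-x_j)}\bigr]=\ee^{-\frac{\beta^*\lambda^2}{2}(G_{ii}-G_{ij})}\,E^*_{\Lambda,\beta^*}\bigl[\ee^{\frac{\lambda\beta^*}{2}U_{ij}}\bigr],
\]
and differentiate twice in $\lambda$. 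These are two standard, interchangeable ways to extract a second moment through a Gaussian Fourier transform, and the intermediate computations (the quadratic form $a_{0^c}^t(-\Delta_{0^c0^c})^{-1}a_{0^c}=\tfrac12(G_{ii}-G_{ij})$ and the cross term $a^tGk=U_{ij}(k)$) match the paper's derivative calculation line for line. Your version has the mild bonus of giving the full characteristic-function duality rather than just the second moment, at no extra cost.
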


\begin{proof}
Given $\Lambda$, let $\md x:=\delta_0(\md x_0)\prod_{\ell\in\Lambda\setminus\{0\}}\md x_\ell$ denote the Lebesgue measure on $\Omega^\R_\Lambda$. Set $\beta:=(4\beta^*)^{-1}\geq 3$. Given $i,j\in\Lambda$ let $h(x)=\obs_{i,j}(x)\ee^{-\beta H_\Lambda(x)}$. Using formula \eqref{eq:Fourier-g} with 
$n=|\Lambda|-1$ and $A=-\beta \Delta_{0^c0^c}$ we obtain its Fourier transform at 
$k\in\R^\Lambda$ 
\begin{align}
\hat h(k)= &\int_{\Omega^\R_\Lambda} \ee^{-\ii k\cdot x}\obs_{i,j}(x)\ee^{-\beta 
H_\Lambda(x)}\md x
= -(\partial_{k_i}-\partial_{k_j})^2
\int_{\Omega^\R_\Lambda} \ee^{-\ii k\cdot x} \ee^{-\beta H_\Lambda(x)}\md x \nonumber\\
= & -\left(\frac{\pi}{\beta}\right)^{\frac{|\Lambda|-1}{2}}
\det(-\Delta_{0^c0^c})^{-\frac12}(\partial_{k_i}-\partial_{k_j})^2
\ee^{\beta^* k^t_{0^c}(\Delta_{0^c0^c})^{-1}k_{0^c}}.
\end{align}
Applying the Poisson summation formula \eqref{eq:PoissonSummationFormula_simple} again, we obtain that
\begin{align}\label{eq:Poisson}
\sum_{x\in\Omega_\Lambda}\obs_{i,j}(x) \ee^{-\beta H_\Lambda(x)}= -\left(\frac{\pi}{\beta}\right)^{\frac{|\Lambda|-1}{2}}
\det(-\Delta_{0^c0^c})^{-\frac12}\sum_{k\in \OLstar}(\partial_{k_i}-\partial_{k_j})^2
\ee^{\beta^* k^t_{0^c}(\Delta_{0^c0^c})^{-1}k_{0^c}}.
\end{align}
According to Corollary \ref{cor:rel-G-delta-inverse}, the expression
$\ee^{\beta^* k^t_{0^c}(\Delta_{0^c0^c})^{-1}k_{0^c}}$ coincides 
with $\ee^{-\frac{\beta^*}{4} k^tGk}$ for $k\in\K_\Lambda$. Consequently, all 
directional derivatives of these two quantities 
in directions of the hyperplane ~$\K_\Lambda$ coincide as well. In particular, all 
$\partial_{k_i}-\partial_{k_j}$ are such directional derivatives. 
Using this in \eqref{eq:Poisson} and plugging in the formula for the partition sum \eqref{eq:Z-poisson}, we calculate 
\begin{align}
E_{\Lambda,\beta}[\obs_{i,j}]=&\frac{1}{Z_{\Lambda,\beta}}\sum_{x\in\Omega_\Lambda}
\obs_{i,j}(x) \ee^{-\beta H_\Lambda(x)}\nonumber\\
= &-\frac{1}{Z_{\Lambda,\beta^*}^*}\sum_{k\in \OLstar}(\partial_{k_i}-\partial_{k_j})^2
\ee^{\beta^* k^t_{0^c}(\Delta_{0^c0^c})^{-1}k_{0^c}}\nonumber\\
= &-\frac{1}{Z_{\Lambda,\beta^*}^*}\sum_{k\in \OLstar}(\partial_{k_i}-\partial_{k_j})^2
\ee^{-\frac{\beta^*}{4} k^tGk}\label{eq:expectation_obs_1}
. 
\end{align}
Moreover, by translation invariance ($G_{ii}=G_{jj}$) and symmetry ($G_{ij}=G_{ji}$) of $G$ it follows 
\begin{align}
&(\partial_{k_i}-\partial_{k_j})^2\ee^{-\frac{\beta^*}{4} k^tGk}\nonumber\\ 
&=\frac{\beta^*}{4} \ee^{-\frac{\beta^*}{4} k^tGk}\bigg[\frac{\beta^*}{4}
\Big(\sum_{\ell\in\Lambda} (G_{i\ell}+G_{\ell i}-G_{j\ell}-G_{\ell j})k_\ell\Big)^2 - 2\big(G_{ii}+G_{jj}-G_{ij}-
G_{ji}\big)\bigg]\nonumber\\
&= \ee^{-\frac{\beta^*}{4} k^tGk}\Big(\frac{(\beta^*)^2}{4} U_{ij}^2(k) 
 - \beta^*(G_{ii}-G_{ij})\Big).\label{eq:second_derivative}
\end{align}
Putting \eqref{eq:bound_square_mean}, \eqref{eq:expectation_obs_1} and 
\eqref{eq:second_derivative} together yields
\begin{align}
0\leq -\frac{1}{Z_{\Lambda,\beta^*}^*}\sum_{k\in \OLstar}\ee^{-\frac{\beta^*}{4} k^tGk}\Big(\frac{(\beta^*)^2}{4} U_{ij}^2(k) 
 - \beta^*(G_{ii}-G_{ij})\Big)\leq M_\beta=M_{(4\beta^*)^{-1}},
\end{align}
or equivalently,
\begin{align}
0\geq E^*_{\Lambda,\beta^*}[U_{ij}^2] -\frac{4}{\beta^*}(G_{ii}-G_{ij})\geq -\frac{4}{(\beta^*)^2} M_{(4\beta^{*})^{-1}}.
\end{align}
This implies the desired conclusion \eqref{eq:energy-cost}, taking into account that $E^*_{\Lambda,\beta^*}[U_{ij}]=0$ by translation invariance.
\end{proof}

\section{Discussion} 
In this section we compare the main result of this paper with results on Debye screening in three dimensions by Brydges \cite{B} and Brydges-Federbush \cite{BF} and results on two-dimensional abelian spin systems and the Coulomb gas by Fr\"ohlich and Spencer \cite{FS}.

As is mentioned in \cite{FS}, their Theorem A in Section 1.3 also applies for the discrete Gaussian model in the special case of weights equal to $1$ on the integers. More precisely, it applies to the \emph{large} temperature regime in the discrete Gaussian model, because the sine-Gordon transformation used there inverts the temperature, so that their $\beta$ plays the role of the temperature. The lower bound for the high-temperature regime in formula (1.20) in \cite{FS} should be seen in contrast to our bound \eqref{eq:bound_square_mean} in the low-temperature regime. The former provides a lower bound which increases logarithmically with distance, while the latter states an upper bound which is uniform in the distance. This illustrates a difference between the high- and low-temperature phases in the two-dimensional discrete Gaussian model. Fr\"ohlich and Spencer \cite{FS} use another way of dualizing the discrete Gaussian model, obtaining the Villain model. In contrast to the discrete Coulomb gas, it deals with continuous angle variables and thus their Theorems C-E cannot be directly compared to our results.

As was mentioned in the introduction of \cite{FS}, ``the Coulomb gas has a high temperature, low density plasma phase characterized by exponential Debye screening''. This was examined in three space dimensions in several papers, in particular, by Brydges in \cite{B} and Brydges-Federbush in \cite{BF}. More precisely, they prove a decay of correlations for observables depending only on the charge particle configuration on compact regions, which is exponential in the distance between these regions. The voltage $U_{ij}$ examined in this paper is not an observable in this class, because it does not only depend on the charge configuration close to $i$ and $j$, but on the whole charge configuration. In this sense, it is not a local observable. Theorem \ref{thm:main} above shows that in the high temperature regime of the two-dimensional lattice Coulomb gas the variance of $U_{ij}$ increases logarithmically in the distance $|i-j|$. We find this an interesting observation, which complements the previously mentioned results on Debye screening. In view of phase transitions, it remains an interesting open problem to study the voltage $U_{ij}$ also in the low-temperature regime.

\paragraph{Acknowledgment.} We thank Aernout van Enter and anonymous referees for remarks on the literature. We also thank the referees for their constructive comments, which helped us to improve the paper.


\end{document}